 \newtheorem{thm}{Theorem}[section]
 \newtheorem{lem}[thm]{Lemma}
 \newtheorem{prop}[thm]{Proposition}
 \theoremstyle{definition}
 \theoremstyle{remark}
 \newtheorem{rem}[thm]{Remark}
 \theoremstyle{definition}
\def\move-in{\parshape=1.75true in 5true in}
\begin{document}

\title{{
The {Waring} rank of the sum of pairwise coprime monomials}}

\author[E. Carlini]{Enrico Carlini}
\address[E. Carlini]{Dipartimento di Scienze Matematiche, Politecnico di Torino, Turin, Italy}
\email{enrico.carlini@polito.it}

\author[M.V.Catalisano]{Maria Virginia Catalisano}
\address[M.V.Catalisano]{Dipartimento di Ingegneria Meccanica, Energetica, della Pro\-du\-zio\-ne, dei Trasporti e dei Modelli Matematici, Universit\`{a} di Genova, Genoa, Italy.}
\email{catalisano@diptem.unige.it}

\author[A.V. Geramita]{Anthony V. Geramita}
\address[A.V. Geramita]{Department of Mathematics and Statistics, Queen's University, King\-ston, Ontario, Canada and Dipartimento di Matematica, Universit\`{a} di Genova, Genoa, Italy}
\email{Anthony.Geramita@gmail.com \\ geramita@dima.unige.it  }

\maketitle


\begin{abstract}
{ In this paper we compute the Waring rank of any polynomial of
the form $F=\sum_{i=1}^r M_i$,  where the $M_i$  are pairwise
coprime monomials, i.e., $GCD(M_i,M_j)=1$ for $i\neq j$. In
particular, we determine the Waring rank of any monomial. As an
application we show that certain monomials in three variables give examples
of forms of rank higher than the generic form. As a further
application we produce a sum of power decomposition for any form
which is the sum of pairwise coprime monomials.}
\end{abstract}


\section{Introduction}

Let $k$ be an algebraically closed field of characteristic zero
and $k[x_1,\ldots,x_n]$ be the standard graded polynomial ring in
$n$ variables. Given a degree $d$ form $F$ the {\it Waring Problem
for Polynomials} asks for the least value of $s$ for which there
exist linear forms $L_1,\ldots,L_s$ such that
\[F=\sum_1^s L_i^d \ . \]
This value of $s$ is called the {\it Waring rank} of $F$ (or
simply the {\it rank} of $F$) and will be denoted by
$\mathrm{rk}(F)$.

There was a long-standing conjecture describing the rank of a
generic form $F$ of degree $d$, but the verification of that
conjecture was only found relatively recently in the famous work
of J. Alexander and A. Hirschowitz \cite{AH95}.  However, for a
given {\it specific} form $F$ of degree $d$ the value of
$\mathrm{rk}(F)$ is not known in general. Moreover, in the general
situation, there is no effective algorithmic way to compute the
rank of a given form. Algorithms exist in special cases, e.g. when
$n=2$ for any $d$ (i.e. the classical algorithm attributed to
Sylvester) and for $d=2$ any $n$ (i.e. finding the canonical forms for
quadratic forms).

Given
this state of affairs, several attempts have been made to compute
the rank of specific forms.  One particular family of examples
that has attracted attention is the collection of monomials.

A few cases where the ranks of specific monomials are computed can
be found in \cite{LM} and in \cite{LandsbergTeitler2010}. In
\cite{RS2011} the authors determine $\mathrm{rk}(M)$ for the
monomials
\[M=\left(x_1\cdot\ldots\cdot x_n\right)^m\] for any $n$ and $m$. In particular, they show that
$\mathrm{rk}(M)=(m+1)^{n-1}$.  In this paper we completely solve
the Waring Problem for monomials in  Proposition
\ref{monmialsumofpoercorol}  showing that
\begin{eqnarray}  \label{monomio} \ \ \ \
\mathrm{rk}({x_1}^{a_1}\cdot\ldots\cdot {x_n}^{a_n})= \frac
1{a_1+1} \Pi_{i=1}^n (a_i+1)= \left\{
\begin{matrix}
1 & \hbox{ for } n=1  \cr \\
\Pi_{i=2}^n (a_i+1) &  \hbox{ for }  n\geq 2\cr
\end{matrix} \right. ,
\end{eqnarray}
where $1\leq a_1\leq\ldots\leq a_n$.
A lengthier proof of this result was first obtained in
\cite{CCG11} and then, in a different form, in
\cite{BBT2012}.

Our approach to solving the Waring Problem for specific
polynomials follows a well known path, namely the use of the
Apolarity Lemma \ref{apolarityLEMMA} to relate the computation of
$\mathrm{rk}(F)$ to the study of ideals of reduced points
contained in the ideal $F^\perp$. Using these ideas we obtained a
complete solution to the Waring problem for polynomials which are
the sum of coprime monomials, see Theorem \ref{mainthm}. More
precisely, if $F=M_1+\ldots +M_r$ where the monomials $M_i$ are
such that $G.C.D.(M_i,M_j)=1,i\neq j$ and $\mathrm{deg}(F)>1$,
then
\[\mathrm{rk}(F)=\mathrm{rk}(M_1)+\ldots +\mathrm{rk}(M_r).\]

Using our knowledge of the rank  we obtained two interesting
applications. We showed that, only in three variables and for
degree high enough, certain monomials provide examples of forms
having rank higher than the generic form, see Proposition
\ref{nleq3monomial}. Finally, we find a minimal sum of powers
decomposition for forms which are the sum of pairwise coprime
monomials. In the case of monomials this result appeared in
\cite{CCG11} and was then improved in \cite{BBT2012}.

The main results of this paper were obtained in July 2011 when the
authors were visiting the University of Coimbra in Portugal. The
authors wish to thank GNSAGA of INDAM for the financial support
during their visit.

\section{Basic facts}

We consider $k$, an algebraically closed field of characteristic
zero and the polynomial rings
{
\[S=k[x_{1,1} ,\ldots, x_{1,n_1},
\ldots \ldots
,x_{r,1}   ,\ldots, x_{r,n_r} ],\]
\[T=k[X_{1,1} ,\ldots, X_{1,n_1},
\ldots \ldots
,X_{r,1}   ,\ldots, X_{r,n_r} ].\]
}
We make $T$ act via differentiation on $S$, e.g. we think of $X_{i,j}
=\partial/ \partial x_{i,j}$. (see, for example, \cite{Ge} or \cite{IaKa}
). We refer to a polynomial in $T$ as $\partial$, instead
of using capital letters. In particular, for any form $F$ in $S_d$
we define the ideal $F^\perp\subseteq T$ as follows:
\[F^\perp=\left\{\partial\in T : \partial F=0\right\}.\]

Given a homogeneous ideal $I\subseteq T$ we denote by $$HF(T/I,i)=
\dim_kT_i -\dim_k I_i$$ its {\it Hilbert function} in degree $i$.
It is well known that for all $i > > 0$ the function $HF(T/I,i)$
is a polynomial function with rational coefficients, called the
{\it Hilbert polynomial} of $T/I$.  We say that an ideal $I\subseteq
T$ is {\it one dimensional} if the Krull dimension of $T/I$ is one,
equivalently the Hilbert polynomial of $T/I$ is some integer
constant, say $s$.  The integer $s$ is then called the {\it
multiplicity} of $T/I$.  If, in addition, $I$ is a radical ideal,
then $I$ is the ideal of a set of $s$ distinct points.  We will
use the fact that if $I$ is a one dimensional saturated ideal of
multiplicity $s$, then $HF(T/I, i)$ is always $\leq s$.

Our main tool is the {\it Apolarity Lemma}, the proof of which can
be found in \cite[Lemma 1.31]{IaKa}.

\begin{lem}\label{apolarityLEMMA}
A homogeneous degree $d$ form $F\in S$ can be written as
\[F=\sum_{i=1}^s L_i^d ,\ \  \ L_i \hbox{ pairwise linearly independent}\]
 if and only if there exists $I\subseteq F^\perp$ such that $I$ is the ideal of a set of $s$ distinct points.
\end{lem}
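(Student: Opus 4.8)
The plan is to prove both implications via the perfect pairing between $T_d$ and $S_d$ induced by the differentiation action, together with an identification, in each degree $\le d$, of the apolar ideal of a power of a linear form with the homogeneous ideal of the corresponding point. Relabel the variables of one block so that $S=k[x_1,\dots,x_n]$ and $T=k[X_1,\dots,X_n]$. The pairing $T_d\times S_d\to k$, $(\partial,G)\mapsto \partial G\in S_0=k$, is nondegenerate: on monomial bases $X^\alpha(x^\beta)=\alpha!\,\delta_{\alpha\beta}$ for $|\alpha|=|\beta|=d$, and $\alpha!\neq 0$ since $\mathrm{char}\,k=0$. Hence for any subspace $W\subseteq S_d$, writing $W^\circ\subseteq T_d$ for its annihilator, one has $\dim W+\dim W^\circ=\dim S_d$ and the double-annihilator identity $(W^\circ)^\circ=W$.

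The technical heart is the following local computation, which I would establish first. To a linear form $L=a_1x_1+\cdots+a_nx_n$ associate the point $P=[a_1:\cdots:a_n]\in\PP^{n-1}$, with homogeneous prime ideal $I_P\subseteq T$ generated by the linear forms of $T$ vanishing at $P$. For $\partial=\sum_j c_jX_j$ one computes $\partial(L^d)=d\big(\sum_j c_ja_j\big)L^{d-1}$, so $\partial$ annihilates $L^d$ exactly when $\partial\in (I_P)_1$; since $I_P$ is generated in degree one, iterating gives $(I_P)_t\subseteq (L^d)^\perp$ for all $t$. Conversely, every order-$t$ derivative of $L^d$ is a scalar multiple of $L^{d-t}$, so the image of $T_t$ acting on $L^d$ is at most one dimensional; by the perfect pairing this yields $HF(T/(L^d)^\perp,t)=1=HF(T/I_P,t)$ for $0\le t\le d$. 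Combined with the containment just proved, this forces $(L^d)^\perp_t=(I_P)_t$ for all $t\le d$, while $(L^d)^\perp_t=T_t$ for $t>d$.

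For the forward implication, suppose $F=\sum_{i=1}^s L_i^d$ with the $L_i$ pairwise linearly independent. The associated points $P_1,\dots,P_s$ are then distinct; set $\XX=\{P_1,\dots,P_s\}$ and $I_\XX=\bigcap_i I_{P_i}$. Given a homogeneous $\partial\in I_\XX$ of degree $t$, either $t>d$, so $\partial$ kills every $L_i^d$ for degree reasons, or $t\le d$, so $\partial\in (I_{P_i})_t=(L_i^d)^\perp_t$ for each $i$ by the key computation. In both cases $\partial F=\sum_i\partial(L_i^d)=0$, whence $I_\XX\subseteq F^\perp$, and $I_\XX$ is the radical ideal of $s$ distinct points.

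For the converse, suppose $I\subseteq F^\perp$ is the ideal $I_\XX$ of $s$ distinct points $\XX=\{P_1,\dots,P_s\}$, with corresponding linear forms $L_1,\dots,L_s$. Put $V=\langle L_1^d,\dots,L_s^d\rangle\subseteq S_d$. The key computation gives $V^\circ=\bigcap_i (L_i^d)^\perp_d=\bigcap_i (I_{P_i})_d=(I_\XX)_d$. Since $I_\XX\subseteq F^\perp$ we get $V^\circ=(I_\XX)_d\subseteq (F^\perp)_d=\{F\}^\circ$; taking annihilators reverses inclusions, so $(\{F\}^\circ)^\circ\subseteq (V^\circ)^\circ=V$, and as $F\in(\{F\}^\circ)^\circ$ always, we conclude $F\in V$. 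Thus $F=\sum_i c_iL_i^d$, and absorbing a $d$-th root of each $c_i$ (which exists since $k$ is algebraically closed) into $L_i$ yields $F=\sum_i(\lambda_iL_i)^d$ with the $\lambda_iL_i$ still pairwise linearly independent. The step I expect to be the main obstacle is precisely this converse duality: correctly identifying $V^\circ$ with $(I_\XX)_d$ and invoking $(W^\circ)^\circ=W$, both of which rest on the degreewise equality $(L^d)^\perp_t=(I_P)_t$ and the nondegeneracy of the apolarity pairing.
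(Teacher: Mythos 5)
Your proposal cannot be compared with ``the paper's own proof'' because the paper does not give one: Lemma \ref{apolarityLEMMA} is quoted as a known tool, with its proof deferred to \cite[Lemma 1.31]{IaKa}. What you have written is the standard self-contained duality argument --- nondegeneracy of the differentiation pairing $T_d\times S_d\to k$ in characteristic zero, the degreewise identification $(L^d)^\perp_t=(I_P)_t$ for $t\le d$ (one containment plus equal codimension one), the forward implication by direct annihilation, and the converse via the double-annihilator identity, which yields $F\in\langle L_1^d,\dots,L_s^d\rangle$. This is essentially the proof in the cited reference, so you have supplied in full the content that the paper outsources; relative to the paper your route is necessarily ``different,'' but relative to the literature it is the canonical one.

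Two caveats. First, a small one: to get $HF(T/(L^d)^\perp,t)=1$ rather than $\le 1$ for $t\le d$, you must observe that $T_t$ does not annihilate $L^d$; in characteristic zero this is immediate (pick $\partial_0\in T_1$ with $\partial_0 L\neq 0$, then $\partial_0^{\,t}(L^d)$ is a nonzero multiple of $L^{d-t}$), but it is exactly what turns your containment-plus-dimension count into an equality, so it should be said. Second, and more substantively: your final step, absorbing $d$-th roots of the $c_i$ into the $L_i$, tacitly assumes every $c_i\neq 0$. If some $c_i=0$, your argument produces a decomposition with fewer than $s$ summands; in fact the ``if'' direction of the lemma as literally stated fails in that case. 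For $F=x_1^d$ with $d\ge 2$ and $n\ge 2$, the ideal of the two points $[1:0:\dots:0]$ and $[0:1:0:\dots:0]$ is contained in $F^\perp$, yet $x_1^d$ is not a sum of two $d$-th powers of pairwise linearly independent linear forms. This is an imprecision in the statement itself --- shared by the paper and by much of the literature, and harmless for the way the lemma is used here, since upper bounds on the rank only improve when coefficients vanish; the version actually proved in \cite{IaKa} asserts $F\in\langle L_1^d,\dots,L_s^d\rangle$, i.e., allows scalar coefficients, and that is precisely what your duality argument establishes. So your proof is correct exactly up to the point where the statement itself requires the coefficient formulation, and you should either state the lemma in that form or add the hypothesis that all $c_i$ are nonzero before absorbing roots.
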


We conclude with the following trivial, but useful, remark showing
that the rank of a form does not vary by adding variables to the
polynomial ring.

\begin{rem}\label{leastvarREM}{
The computation of the rank of  $F$ is independent of the polynomial ring in which we consider $F$.    To see this,
consider a rank $d$ form $F\in k[x_1,\ldots,x_n]$ and suppose we
know $\mbox{rk}(F)$. We can also consider $F\in
k[x_1,\ldots,x_n,y]$ and we can look for a sum of powers
decomposition of $F$ in this extended ring. If
\[F(x_1,\ldots,x_n)=\sum_1^r
\left(L_i(x_1,\ldots,x_n,y)\right)^d,\] then, by setting $y=0$, we
readily get $r\geq \mbox{rk}(F)$. Thus, by adding variables we can
not get a sum of powers decomposition involving fewer summands.
Moreover, if $r$ is the minimal length of a sum of powers
decomposition of $F$ in the extended ring, we readily get $r=
\mbox{rk}(F)$. In particular, given a monomial
\[M=x_1^{a_1}\cdot\ldots\cdot
x_n^{a_n},\] with $1\leq a_1\leq\ldots\leq a_n$ it is enough to
work in $k[x_1,\ldots,x_n]$ in order to compute
$\mbox{rk}(M)$.}\end{rem}

\section{Main result}

It is useful to recall the following. Let $I\subseteq {T}$ be an ideal
and $\partial\in T_1$ a linear  homogeneous differentiation. If $\partial$ is
not a zero divisor in $T/I$ then
\begin{eqnarray}  \label{HF}
HF(T/I,t)=\sum_{i=0}^t
HF(T/(I+(\partial)),i)).
\end{eqnarray}

We first compute the rank of any monomial. Thus, we only consider
the case $r=1$ and, just for this result, we drop the double index
notation, i.e. we abuse notation and we let $S=k[x_1,\ldots,x_n]$
and $T=k[X_1,\ldots,X_n]$.

\begin{prop}\label{monmialsumofpoercorol}
Let $n\geq 1$ and $1\leq a_1\leq \ldots \leq a_n$. If
\[M=x_1^{a_1}\cdot\ldots\cdot x_n^{a_n},\]
then $\mbox{rk}(M)=\frac 1{a_1+1} \Pi_{i=1}^n (a_i+1)$.
\end{prop}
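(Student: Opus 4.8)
The plan is to use the Apolarity Lemma (Lemma~\ref{apolarityLEMMA}) to convert the rank computation for $M=x_1^{a_1}\cdots x_n^{a_n}$ into a question about ideals of reduced points inside $M^\perp$. First I would compute $M^\perp$ explicitly: since $T$ acts by differentiation, a monomial $X_1^{b_1}\cdots X_n^{b_n}$ kills $M$ precisely when some $b_i>a_i$, so $M^\perp=(X_1^{a_1+1},\ldots,X_n^{a_n+1})$. The quotient $A=T/M^\perp$ is then an Artinian complete intersection whose socle degree is $d=a_1+\cdots+a_n$, and $\dim_k A=\Pi_{i=1}^n(a_i+1)$.

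The lower bound $\mathrm{rk}(M)\geq \frac{1}{a_1+1}\Pi_{i=1}^n(a_i+1)$ is what I expect to be the main obstacle. The idea is that if $I\subseteq M^\perp$ is the ideal of $s$ reduced points, then $s=HF(T/I,t)$ for $t\gg 0$, and I must show $s$ cannot be smaller than the claimed value. The natural strategy is to bound $HF(T/M^\perp, t)$ from below at a well-chosen degree $t$ and relate this to $s$ via the inclusion $I\subseteq M^\perp$, which gives a surjection $T/I\twoheadrightarrow T/M^\perp$ in each degree, hence $HF(T/M^\perp,t)\leq HF(T/I,t)\leq s$. The delicate point is that $HF(T/M^\perp,t)$ is maximized in the middle degrees but equals the full length only at the end; one wants a degree where this Hilbert value already reaches $\frac{1}{a_1+1}\Pi(a_i+1)$. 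Here I would exploit formula~(\ref{HF}): choosing the linear form $\partial=X_1$ (the variable with smallest exponent), I note $X_1$ is a nonzerodivisor on $T/I$ when $I$ is the ideal of points not all lying in the hyperplane $X_1=0$, so $HF(T/I,t)=\sum_{i=0}^t HF(T/(I+(X_1)),i)$, and the maximal value of the right-hand side over all $t$ is exactly $s$. Since $I+(X_1)\supseteq M^\perp+(X_1)=(X_1,X_2^{a_2+1},\ldots,X_n^{a_n+1})$, whose quotient has length $\Pi_{i=2}^n(a_i+1)=\frac{1}{a_1+1}\Pi_{i=1}^n(a_i+1)$, I get $s\geq\sum_i HF(T/(I+(X_1)),i)\geq \Pi_{i=2}^n(a_i+1)$, giving the lower bound.

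For the upper bound I must produce an ideal of exactly $\frac{1}{a_1+1}\Pi_{i=1}^n(a_i+1)$ reduced points sitting inside $M^\perp$. The construction I would try is to take, for each $i\geq 2$, the $a_i+1$ distinct $(a_i+1)$-th roots of unity, and build a grid of points in the coordinates $X_2,\ldots,X_n$ (omitting $X_1$), giving $\Pi_{i=2}^n(a_i+1)$ points; one then checks that the ideal of this reduced point set, suitably homogenized and placed in the hyperplane complement, is contained in $M^\perp$. Concretely, the ideal generated by $X_2^{a_2+1}-X_1^{a_2+1},\ldots,X_n^{a_n+1}-X_1^{a_n+1}$ cuts out a reduced zero-dimensional scheme whose saturation is the ideal of $\Pi_{i=2}^n(a_i+1)$ distinct points, and each generator lies in $M^\perp$ since $X_j^{a_j+1}$ and $X_1^{a_j+1}$ (note $a_j+1>a_1$ is not automatic, so one uses $a_1\le a_j$) both annihilate $M$. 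The final step is to verify that these points are genuinely reduced and distinct, so that Lemma~\ref{apolarityLEMMA} applies and yields a decomposition of the required length, matching the lower bound and completing the proof.
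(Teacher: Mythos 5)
Your upper bound is exactly the paper's: the complete intersection $(X_2^{a_2+1}-X_1^{a_2+1},\ldots,X_n^{a_n+1}-X_1^{a_n+1})\subseteq M^\perp$ cutting out $\Pi_{i=2}^n(a_i+1)$ reduced points. The genuine gap is in your lower bound. Formula (\ref{HF}) requires $\partial$ to be a nonzerodivisor on $T/I$, and your criterion for this --- that the points of the apolar set do ``not all lie'' in the hyperplane $\{X_1=0\}$ --- is false: for the saturated radical ideal $I$ of a finite set of points, $X_1$ is a nonzerodivisor on $T/I$ precisely when \emph{no} point lies on $\{X_1=0\}$. If the set contains a point $P$ on the hyperplane and at least one point off it, then any form $G$ vanishing at every point except $P$ satisfies $X_1G\in I$ but $G\notin I$. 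Nothing allows you to assume a priori that a set of points apolar to $M$ avoids $\{X_1=0\}$ (that \emph{minimal} such sets do is a consequence of the theorem, cf.\ Remark \ref{leastvarrem}, so assuming it would be circular). When $X_1$ is a zerodivisor, the exact sequence $0\to (T/(I:X_1))(-1)\to T/I\to T/(I+(X_1))\to 0$ only yields $HF(T/I,t)\leq\sum_{i=0}^t HF(T/(I+(X_1)),i)$, an inequality in the useless direction. Nor can you escape by taking a generic linear form in place of $X_1$: that restores the nonzerodivisor property but ruins the bound, e.g.\ for $M=x_1x_2x_3$ one has $\dim_k T/(M^\perp+(\partial))=3<4=\mathrm{rk}(M)$ for generic $\partial$, so the argument really needs the special variable $X_1$.

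The paper closes exactly this gap with one move you are missing: replace $I$ by the colon ideal $I'=I:(X_1)$, which is the ideal of the subset of the points lying off $\{X_1=0\}$. Then $s'\leq s$ where $s'$ is the number of such points, $s'>0$ because $X_1\notin M^\perp\supseteq I$, the form $X_1$ \emph{is} a nonzerodivisor on $T/I'$, and $I'+(X_1)\subseteq (M^\perp:(X_1))+(X_1)=(X_1,X_2^{a_2+1},\ldots,X_n^{a_n+1})$, so your intended computation goes through for $I'$ and gives $s\geq s'\geq \Pi_{i=2}^n(a_i+1)$. (A smaller slip: your containment ``$I+(X_1)\supseteq M^\perp+(X_1)$'' is backwards; the inclusion $I\subseteq M^\perp$ gives $\subseteq$, which is in fact the direction your subsequent Hilbert function inequality uses. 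That is a typo, not the essential problem --- the essential problem is the unjustified nonzerodivisor claim.)
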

\begin{proof}
If $n=1$, then $M$ is the power of a variable and
$\mbox{rk}(M)=1$; we can then assume $n>1$. The perp ideal of $M$
is $M^\perp=(X_1^{a_1+1},\ldots,X_n^{a_n+1})$ and hence
\[I=
(X_n^{a_n+1}-X_1^{a_n+1},\ldots,X_2^{a_2+1}-X_1^{a_2+1})\subseteq
M^\perp .\] As $I$ is the ideal of a complete intersection scheme
of $\frac 1{a_1+1} \Pi_{i=1}^n (a_i+1)$ distinct points, the
Apolarity Lemma yields
\[\mbox{rk}(M)\leq \frac{1}{a_1+1}
\Pi_{i=1}^n (a_i+1).\]

We now consider $I\subseteq M^\perp$ the ideal of a scheme of $s$
distinct points; to complete the proof it is enough to show that
$s\geq \frac{1}{a_1+1} \Pi_{i=1}^n (a_i+1)$. To do this, we set
 $I'=I:(X_1)$ and we notice that $I'$ is the ideal of a scheme of
$s'\leq s$ distinct points; notice that $s'>0$ as $X_1\not\in I$.
Clearly we have
\[I'+(X_1)\subseteq J=M^\perp:(X_1)+(X_1)=(X_1,X_2^{a_2+1},\ldots,X_n^{a_n+1}).\]
 Hence, for $t\gg 0$ we get
\[s'=HF(T/I',t)=\sum_{i=0}^t HF(T/(I'+(X_1)),i)\geq \]
\[\sum_{i=0}^t HF(T/J,i)=\frac 1{a_1+1} \Pi_{i=1}^n (a_i+1)\]
where the last equality holds as $J$ is a complete intersection
ideal. The conclusion then follows as $s\geq s'$.
\end{proof}

We now state and prove our main result.

\begin{thm}\label{mainthm}
Consider the degree $d$ form
\[F= M_1+ \ldots + M_r
\]
\[= x_{1,1} ^{a_{1,1}}\cdot\ldots\cdot x_{1,n_1}^{a_{1,n_1}}+
\ldots \ldots
+  x_{r,1}  ^{a_{r,1}}  \cdot\ldots\cdot x_{r,n_r}^{a_{r,n_r}}  ,  \]
where
\[a_{i,1}+ \ldots+{a_{i,n_i}}=d, \ \ \ \   1\leq a_{i,1}\leq\ldots \leq a_{i,n_i}, \ \ \     \   (1 \leq i \leq r ) .
\]

If $d=1$ then $\mathrm{rk}(F)=1$. If $d\geq 2$, then
$$\mathrm{rk}(F)=\sum _{i=1}^r
 \mathrm{rk}(M_i)
.$$

\end{thm}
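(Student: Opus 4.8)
The plan is to prove the theorem by establishing two inequalities. The upper bound $\mathrm{rk}(F)\leq\sum_{i=1}^r\mathrm{rk}(M_i)$ is immediate: if each $M_i=\sum_{j}L_{i,j}^d$ is an optimal Waring decomposition, then since the $M_i$ involve pairwise disjoint sets of variables, concatenating these decompositions writes $F$ as a sum of $\sum_i\mathrm{rk}(M_i)$ powers of linear forms. The real content is the reverse inequality, and this is where the Apolarity Lemma (Lemma \ref{apolarityLEMMA}) and the structure of $F^\perp$ must be exploited.

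First I would compute $F^\perp\subseteq T$ explicitly. Because the monomials $M_i$ are pairwise coprime, they live in disjoint blocks of variables $\{x_{i,1},\ldots,x_{i,n_i}\}$, and one expects $F^\perp$ to be controlled block-by-block. Concretely, each $X_{i,j}^{a_{i,j}+1}$ annihilates $M_i$ (and also kills every other $M_\ell$, since those do not involve the variable $x_{i,j}$), so all these pure powers lie in $F^\perp$. The subtler generators come from the fact that $F$ is a \emph{sum}: differential operators that equalize the values obtained from different blocks, such as differences of top-degree derivatives $X_{i,1}^{a_{i,1}}\cdots X_{i,n_i}^{a_{i,n_i}}-X_{\ell,1}^{a_{\ell,1}}\cdots X_{\ell,n_\ell}^{a_{\ell,n_\ell}}$ (up to scalar), belong to $F^\perp$ as well. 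The key step is to describe $F^\perp$ precisely enough to control any ideal of points $I\subseteq F^\perp$.

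The heart of the argument, by analogy with the proof of Proposition \ref{monmialsumofpoercorol}, is a colon-ideal and Hilbert-function computation. Given any radical ideal $I\subseteq F^\perp$ defining $s$ distinct points, I would partition the points according to which block of variables they are supported on, or more robustly, isolate the contribution of each $M_i$ by colon operations. For each block one selects a non-zerodivisor linear form $\partial_i$ (playing the role of $X_1$ in the monomial proof), forms the colon ideal $I:(\partial_i)$, and uses the additive Hilbert-function formula (\ref{HF}) $HF(T/I,t)=\sum_{i=0}^t HF(T/(I+(\partial)),i)$ together with the inclusion into the block-wise complete intersection to extract a lower bound of $\mathrm{rk}(M_i)$ points attributable to each block. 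Summing over the $r$ blocks, using that the blocks are variable-disjoint so their point-contributions do not overlap, should yield $s\geq\sum_{i=1}^r\mathrm{rk}(M_i)$.

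The main obstacle I anticipate is the bookkeeping that guarantees the block contributions are genuinely disjoint and therefore additive rather than merely each bounded below by $\mathrm{rk}(M_i)$. In the single-monomial case the colon ideal argument produces one clean lower bound, but with several coprime monomials one must ensure that no cancellation or sharing of points across blocks lets a single point count toward two different $M_i$'s, and that the coprimality hypothesis is what forces the summands of the Hilbert polynomial to add. Making this rigorous likely requires a careful analysis of the ideal $F^\perp$ restricted to each block of variables, showing that modulo the other variables the situation reduces exactly to the monomial case handled in Proposition \ref{monmialsumofpoercorol}; the hypothesis $d\geq2$ presumably enters precisely to rule out the degenerate linear case where the decomposition collapses.
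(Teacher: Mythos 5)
Your upper bound is correct and matches the paper, and your general framework (apolarity, colon ideals, Hilbert functions) is the right one. But the lower bound, which is the entire content of the theorem, is not proved: you explicitly flag the additivity of the block contributions as ``the main obstacle'' and then leave it unresolved, and the route you sketch for resolving it would in fact fail. Partitioning the points of $\mathbb{X}$ ``according to which block of variables they are supported on'' cannot work, because the points of a minimal decomposition of $F$ are generically supported on \emph{no} single block: indeed, the paper's Remark \ref{leastvarrem} shows that in any minimal decomposition every linear form must involve all the variables $X_{1,1},\ldots,X_{r,1}$, so every point ``belongs'' to every block. Likewise, running the monomial-style colon argument separately for each block $i$ (coloning by $X_{i,1}$ and including into the block-$i$ complete intersection) only yields $s\geq \mathrm{rk}(M_i)$ for each $i$ separately, i.e.\ $s\geq\max_i \mathrm{rk}(M_i)$, never the sum; nothing in that per-block computation prevents the same $s$ points from being counted $r$ times.

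The paper's proof gets additivity by a genuinely joint argument that your proposal is missing. One colons $I$ by the whole set $(X_{1,1},\ldots,X_{r,1})$ at once, takes a single generic non-zerodivisor $\partial=X_{1,1}+\cdots+X_{r,1}$, and proves the inclusion $I'+(\partial)\subseteq J_1\cap\cdots\cap J_r$, where $J_i$ is the complete intersection ideal attached to the $i$-th block together with all variables from the other blocks. The sum $\sum_i \mathrm{rk}(M_i)$ then emerges from two facts that must be proved together: (i) a Mayer--Vietoris/CRT-type claim, proved by induction on $r$ via the exact sequence
\[
0\longrightarrow T/(J_1\cap\cdots\cap J_r)\longrightarrow T/J_1\oplus T/(J_2\cap\cdots\cap J_r)\longrightarrow T/(J_1+J_2\cap\cdots\cap J_r)\longrightarrow 0
\]
and the observation that $J_1+(J_2\cap\cdots\cap J_r)$ is the maximal ideal, which gives $\sum_{i\le t}HF(T/J_1\cap\cdots\cap J_r,i)=\sum_j \mathrm{rk}(M_j)-(r-1)$; and (ii) a careful count of linear forms, showing that $I'$ contains no linear form (a contradiction argument using $X_{i,1}L\in F^\perp$ and the fact that $F^\perp$ has no linear forms) while $(J_1\cap\cdots\cap J_r)_1$ has dimension exactly $r$, which recovers the correction term $+(r-1)$ that exactly cancels the loss in (i). Without both ingredients the bookkeeping you worry about does not close, so as it stands your proposal establishes only the easy inequality.
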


\begin{proof}

The case $d=1$ is trivial as $F$ is a linear form, thus we only
have to prove the $d\geq 2$ case.
{
For $d=2$, $F$ is a quadratic form. Since  its  associated matrix  is {congruent} to a diagonal matrix of rank
$\sum _{i=1}^r n_i$ the conclusion follows. }

For $r=1$ the form $F$ is a monomial and the theorem is
proved  in  Proposition \ref{monmialsumofpoercorol}.

Hence we have
only to consider the cases $d>2$ and $r>1$.

By writing each monomial $M_i$ as a sum of powers we get a sum of
powers decomposition of $F$, thus we have $\mathrm{rk}(F)\leq \sum
_{i=1}^r\mathrm{rk}(M_i)$.
Hence, using Lemma
\ref{apolarityLEMMA}, it is enough to show that if $
F^\perp$ contains   the ideal of a scheme of $s$ distinct
points, then $s\geq \sum _{i=1}^r\mathrm{rk}(M_i)$.

Let $I \subseteq F^\perp$  be the ideal of a scheme  $\mathbb X $ of $s$ distinct
points, and let $ \mathbb X' \subseteq \mathbb X$ be the subsets of the $s'$ points of $\mathbb X$ not lying  on $\{X_{1,1}=\dots =  X_{r,1}=0$\}.
Let $I'$ be the  ideal of  $\mathbb X' $, i.e.,
\[  I'=I:(  X_{1,1},\dots ,  X_{r,1} )  .
\]
We will prove that $ s' \geq  \sum _{i=1}^r\mathrm{rk}(M_i)$, so the conclusion will
 follow as $s\geq s'$.

 The generic  linear derivation
 $\alpha_1 X_{1,1}+\ldots + \alpha_r X_{r,1}$ (where $\alpha_i \in  k $)  is not a zero divisor in $T/I'$.  Without loss of generality, and
possibly rescaling the variables, we may assume that
$$\partial= X_{1,1}+\ldots +  X_{r,1}$$
 is not a zero divisor in $T/I'$.
Hence,
for  $t\gg 0$ we get
\begin{equation} \label{0}
s' = HF(T/I',t)  = \sum_{i=0}^t HF(T/(I'+ (  \partial ) ),i) .
\end{equation}

Let $w$, ($0 \leq w \leq r$), be the number of $1$'s in the set
$\{ a_{1,1}, \dots, a_{r,1}
\}$. We may assume that
\[ a_{1,1}= \dots= a_{w,1} =1.
\]

We have
\[I' + (\partial)
\subseteq
 (F^\perp:( X_{1,1},\dots , X_{r,1} ))+ (\partial )
 \]
\[=  (F^\perp:( X_{1,1}) )\cap \ldots \cap  (F^\perp:( X_{r,1})) + (\partial )
\]
\[\subseteq
(x_{1,2} ^{a_{1,2}} \cdot\ldots\cdot x_{1,n_1}^{a_{1,n_1}})^\perp
\cap
\ldots \ldots
\cap ( x_{w,2}  ^{a_{w,2}}  \cdot\ldots\cdot
 x_{w,n_w}^{a_{w,n_w}} )^\perp
\]
\[
\cap
( ( x_{{w+1},1}  ^{a_{{w+1},1}-1}  \cdot
 x_{{w+1},2}  ^{a_{{w+1},2}}  \cdot \ldots\cdot
 x_{{w+1},n_{w+1}}^{a_{{w+1},n_{w+1}}} )^\perp + (X_{{w+1},1}) )
\]
\[
\cap
\ldots \ldots
\cap
( ( x_{r,1}  ^{a_{r,1}-1}  \cdot
 x_{r,2}  ^{a_{r,2}} \cdot \ldots\cdot
 x_{r,n_r}^{a_{r,n_r}} )^\perp + (  X_{r,1} ))
\]
\[
= J_1\cap   \ldots \cap  J_r, \]
where
\[J_{1}=(X_{1,1},X_{1,2}^{a_{1,2} +1}, \ldots,
X_{1,n_1}^{a_{1,n_1}+1},
X_{2,1},  \ldots, X_{2,n_2},
\ldots \ldots ,
X_{r,1},  \ldots, X_{r,n_r} );\]

\[J_{2}=(X_{1,1},\ldots,  X_{1,n_1},
X_{2,1}, X_{2,2} ^{a_{2,2}+1}\ldots, X_{2,n_2}^{a_{2,n_2}+1},
\ldots \ldots ,
X_{r,1},  \ldots, X_{r,n_r} );\]
\[ \vdots
\]
\[J_{r}=(X_{1,1},\ldots,  X_{1,n_1},
\ldots \ldots ,
X_{r,1}, X_{r,2} ^{a_{r,2}+1} \ldots, X_{r,n_r}  ^{a_{r,n_r}+1} ).\]

Observe that, if $n_i=1$, then $J_i$ is the maximal ideal.

So we have
\begin{equation} \label{1}
I' + (\partial) \subseteq  J_1\cap   \ldots \cap  J_r.
\end{equation}

The only linear forms in $ J_1\cap   \ldots \cap  J_r$ are
$X_{1,1},$ $ X_{2,1},$ $  \ldots , $ $X_{r,1}$, hence
\begin{equation} \label{2}
  \dim ( J_1\cap   \ldots \cap  J_r)_1 = r.
\end{equation}

Now we will prove by contradiction that in $I'$ there are no linear forms.
Assume that   $ L \in I' $ is a linear form,
$$L = \alpha _{1,1} X_{1,1}+\ldots+  \alpha _{1,n_1} X_{1,n_1}+
\cdots +
\alpha _{r,1} X_{r,1}+  \cdots+ \alpha _{r,n_r} X_{r,n_r}.
$$
Since $I' = I : (  X_{1,1},\dots ,  X_{r,1} ) $ we have
 $$X_{1,1}L,\dots ,  X_{r,1} L \in I  .$$
Hence $X_{i,1}L \in F^{\perp} $, ($1 \leq i \leq r$),  so
for $1 \leq i \leq w$ we get
$$ \alpha _{i,2}= \ldots = \alpha _ {i,n_i} =0,$$
and for $ i > w$ we get
$$ \alpha _{i,1}= \ldots = \alpha _ {i,n_i} =0.$$
Hence
$$L = \alpha _{1,1} X_{1,1}+\ldots+\alpha _{w,1} X_{w,1}.
$$

Let $ \mathbb X'' = \mathbb X \setminus \mathbb X'$, that is,  $\mathbb X''$ is the subsets of the points of $\mathbb X$  lying  on $\{X_{1,1}=\dots =  X_{r,1}=0$\}. Obviously
$ \{L=0 \}\supseteq \mathbb X'' $. It follows that $L \in I$.

 Since $I \subseteq F^\perp $, and in $ F^\perp$ there are no linear forms, we get a contradiction.

So we have
\begin{equation}  \label{3}
 \dim ( I' + (\partial) )_1 =1 . \end{equation}

Now by (\ref{0}), (\ref{1}),  (\ref{2}),  (\ref{3}), we get
\[
s' = \sum_{i=0}^t HF(T/(I'+ (  \partial ) ),i)
\]
\[=\dim T_1-1+ \sum_{i\neq1; \  i=0}^t HF(T/(I'+ (  \partial ) ),i)
\]
\[\geq \dim T_1-1+  \sum_{i\neq1; \ i=0}^t HF(T/ J_1\cap   \ldots \cap  J_r,i)
\]
\[=\dim T_1-1+  \sum_{ i=0}^t HF(T/ J_1\cap   \ldots \cap  J_r,i) -(\dim T_1 -r).
\]
Hence
\begin{equation} \label {7}
s'  \geq   \sum_{ i=0}^t HF(T/ J_1\cap   \ldots \cap  J_r,i) +r-1.
\end{equation}

We now need the following claim.

{\it Claim:}  For $t \gg 0$,
\[\sum_{i=0}^t HF(T/  J_1\cap \ldots \cap J_r ,i)=
\sum_{i=0}^t HF(T/  J_1 ,i)+ \ldots +\sum_{i=0}^t HF(T/  J_r ,i)-r+1
.
\]
\begin{proof}[Proof of the Claim:]
To prove the claim we proceed by induction on $r$. If $r=1$ the claim is obvious. Let $r>1$ and consider
 the following short exact sequence:
\[
0\longrightarrow T/ J_1\cap \ldots \cap J_r \longrightarrow
T/J_1\oplus T/ J_2\cap \ldots \cap J_r
\longrightarrow T/(J_1+ J_2\cap \ldots \cap J_r)\longrightarrow 0.
\]
By the inductive hypothesis, and since $J_1+ J_2\cap \ldots \cap J_r$ is the maximal ideal, we get the conclusion.
\end{proof}
Now we notice that  for $t \gg 0$  and since the $J_i$  are generated by regular sequences of length $n_1 + \dots +n_r$, we have
\[\sum_{i=0}^t HF(T/J_1,i)=  \frac 1{a_{1,1}+1}\Pi_{j=1}^{n_1 }  ( a_{1,j}+1) =
 \mathrm{rk}(M_1),
\]
\[ \vdots
\]
\[\sum_{i=0}^t HF(T/J_r,i)=  \frac 1{a_{r,1}+1}\Pi_{j=1}^{n_r }  ( a_{r,j}+1) = \mathrm{rk}(M_r).
\]
Hence by (\ref{7}) and the claim the conclusion immediately follows.

\end{proof}

\begin{rem}\label{leastvarrem}{
Let $F=\sum_1^r M_i$ be as in Theorem \ref{mainthm} and
$\mathbb{X}$ be a set of $s$ distinct points such that
$I_{\mathbb{X}}\subset F^\perp$. If $\mathbb X \cap
\{X_{1,1}=\dots = X_{r,1} =0\}=\mathbb{X}'\neq \emptyset$ is a set of $s'$ points, by the proof of the theorem we see that $s\geq s'+1 \geq\mathrm{rk}(F)+1$. In particular,
$\mathbb{X}$ does not have the least possible
cardinality if it intersects the special linear space
$\{X_{1,1}=\dots = X_{r,1} =0\}$.}
\end{rem}

\section{Applications}

We now present a few applications of our results.

\subsection{On the rank of the generic form}

It is well known, see \cite{AH95}, that for the generic degree $d$
form in $n$ variables $F$ one has
\[\mathrm{rk}(F)=\left\lceil{{d+n-1\choose d}\over n}\right\rceil.\]
However, the rank for a given specific form can be bigger or
smaller than that number. Moreover, it is trivial to see that
every form of degree $d$ is a sum of ${d+n\choose d}$ powers of
linear forms.  But, in general, it is not known how big the rank
of a degree $d$ form can be.

Using monomials we can try to produce explicit examples of forms
having rank bigger than that of the generic form. We give a
complete description of the situation for the case of three
variables. In that case, for $d \gg 0$, there are degree $d$
monomials with rank bigger than that the generic form, see
Proposition \ref{nleq3monomial}. However, for more than three
variables, this is no longer the case, see Remark
\ref{ngeq4monomial}.

\begin{prop}\label{nleq3monomial} Let $n=3$ and $d>2$ be an integer. Then
\[
\mbox{max}\left\{\mbox{rk}(M): M \in S_d\mbox{ is a
monomial}\right\}= \left\{\begin{array}{ll}\left({d+1\over
2}\right)^2 & d\mbox{ is odd} \\ \\ {d\over 2}\left({d\over
2}+1\right) & d\mbox{ is even}\end{array}\right.
\]
and this number is asymptotically ${3\over 2}$ of the rank of the
generic degree $d$ form in three variables, i.e.
\[\mbox{max}\left\{\mbox{rk}(M): M \in S_d\mbox{ is a
monomial}\right\}\simeq {3\over 2}\left\lceil{{d+2\choose 2}\over
3}\right\rceil\] for $d\gg 0$.
\end{prop}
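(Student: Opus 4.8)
The plan is to reduce the problem to an optimization over the exponent vectors $(a_1,a_2,a_3)$ with $a_1+a_2+a_3=d$ and $1\le a_1\le a_2\le a_3$, using the explicit rank formula from Proposition \ref{monmialsumofpoercorol}, namely $\mathrm{rk}(M)=\frac{1}{a_1+1}\prod_{i=1}^3(a_i+1)=(a_2+1)(a_3+1)$ when $n=3$. Thus maximizing the rank among degree $d$ monomials in three variables amounts to maximizing the product $(a_2+1)(a_3+1)$ subject to $a_2+a_3=d-a_1$ and the ordering constraints.

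First I would observe that, since the factor $(a_1+1)$ cancels, the rank depends only on $a_2$ and $a_3$, and it is increasing in the available "budget" $a_2+a_3=d-a_1$. Hence to maximize we want $a_1$ as small as possible, i.e. $a_1=1$, giving $a_2+a_3=d-1$ with $1\le a_2\le a_3$. The remaining step is the elementary fact that a product of two positive integers with fixed sum $s=d-1$ is maximized when the two factors $(a_2+1)$ and $(a_3+1)$ are as equal as possible. Carrying this out splits into the parity cases: if $d$ is odd then $d-1$ is even, the balanced choice $a_2=a_3=\frac{d-1}{2}$ is admissible, and the product is $\left(\frac{d-1}{2}+1\right)^2=\left(\frac{d+1}{2}\right)^2$; if $d$ is even then $d-1$ is odd, the most balanced admissible split is $a_2=\frac{d-2}{2}=\frac{d}{2}-1$ and $a_3=\frac{d}{2}$, giving $\left(\frac{d}{2}\right)\left(\frac{d}{2}+1\right)$. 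This matches the two cases in the statement.

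I would then double-check the boundary of the feasible region, namely that pushing $a_1$ above $1$ (which forces $a_2,a_3$ to share a smaller budget) can never help: any unit transferred from the balanced pair into $a_1$ strictly decreases the fixed sum $a_2+a_3$, hence strictly decreases the maximal attainable product, so $a_1=1$ is genuinely optimal. A small verification that the claimed optimizers satisfy $1\le a_1\le a_2\le a_3$ for all $d>2$ is also needed; for $d=3$ one gets $(1,1,1)$ with rank $4=\left(\frac{d+1}{2}\right)^2$, and the ordering holds throughout.

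For the asymptotic claim, I would compare the maximum just computed with the generic rank $\left\lceil\binom{d+2}{2}/3\right\rceil$. Since $\binom{d+2}{2}=\frac{(d+1)(d+2)}{2}\sim\frac{d^2}{2}$, the generic rank is asymptotic to $\frac{d^2}{6}$, while the monomial maximum is asymptotic to $\frac{d^2}{4}$ in both parity cases; the ratio $\frac{d^2/4}{d^2/6}=\frac{3}{2}$ then gives the stated relation $\mathrm{max}\simeq\frac{3}{2}\left\lceil\binom{d+2}{2}/3\right\rceil$ as $d\to\infty$. I do not expect any serious obstacle here: the content is entirely the reduction to the product-maximization via the rank formula, and the only care needed is the parity bookkeeping and confirming $a_1=1$ is optimal rather than merely feasible.
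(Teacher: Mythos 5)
Your proof is correct and takes essentially the same approach as the paper: both reduce the problem via Proposition \ref{monmialsumofpoercorol} to maximizing $(a_2+1)(a_3+1)$ subject to $a_2+a_3=d-a_1$, conclude that $a_1=1$ with $a_2,a_3$ as balanced as possible (split by parity) is optimal, and obtain the asymptotic ratio by comparing $d^2/4$ with the generic rank $\sim d^2/6$. The only difference is cosmetic: you justify the balancing step with the discrete equal-factors argument, whereas the paper phrases it as a continuous optimization (a segment constraint against a hyperbola branch).
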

\begin{proof}
We consider monomials $x_1^{a_1}x_2^{a_2}x_3^{a_3}$ with the
conditions $1\leq a_1\leq a_2\leq a_3$,
\[a_1+a_2+a_3=d\]
and we want to maximize the function $f(a_2,a_3)=(a_2+1)(a_3+1)$.
Considering $a_1$ as a parameter we are reduced to an optimization
problem in the plane where the constraint is given by a segment
and the target function is the branch of an hyperbola. For any
given $a_1$, it is easy to see that the maximum is achieved when
$a_2$ and $a_3$ are as close as possible to ${d-a_1 \over 2}$.
Also, when $a_1=1$ we get the maximal possible value. In
conclusion $\mathrm{rk}(M)$ is maximal for the monomial
\[ M=x_1x_2^{d-1\over 2}x_3^{d-1\over 2} (d \mbox{ odd}) \mbox{ or } M=x_1x_2^{{d\over 2} - 1}x_3^{{d\over 2}} (d\mbox{ even}).\]
With a straightforward computation, one easily sees that the rank
of the generic form is asymptotically ${d^2 \over 6}$, while the
maximal rank of a degree $d$ monomial is asymptotically ${d^2
\over 4}$. The conclusion follows.

\end{proof}

\begin{rem}\label{ngeq4monomial} { If $n\geq 4$ and $d\gg 0$, the degree
$d$ monomials do not provide examples of high rank forms. For
example, let $d=(n-1)k+1$ and consider a highest rank degree $d$
monomial
\[M=x_1x_2^k\cdot \ldots\cdot x_n^k.\]
If $F$ is a generic degree $d$ form, then
\[\mathrm{rk}(M)\simeq {n!\over (n-1)^{n-1} }\mathrm{rk}(F)\]
for $d\gg 0$ and we note that ${n!\over (n-1)^{n-1} }\leq 1$ if
$n\geq 4$. Hence, for each $n\geq 4$, there are infinitely many
values of $d$ for which no degree $d$ monomial has rank bigger
than the generic form.}
\end{rem}

\subsection{Sum of powers decomposition for polynomials}

Since we now know the rank of any given monomial, we can give a
description of one of its minimal sum of powers decompositions. An
explicit form for the scalars $\gamma$ can be found in
\cite{BBT2012} and it was also noticed by G. Whieldon
\cite{Whieldon}. In Remark \ref{sumofcoprimerem} we see how to use
this to obtain a minimal sum of powers decomposition for the sum
of coprime monomials.

\begin{prop}\label{sumofpowerdecmon}{  For integers  $1\leq a_1\leq \ldots \leq a_n$ consider the monomial
\[M=x_1^{a_1}\cdot\ldots\cdot x_n^{a_n}\]
and let $\mathcal{Z}(i)=\{z\in\mathbb{C} : z^{a_i+1}=1\}$. Then
\[M=\sum_{\epsilon(i)\in\mathcal{Z}(i),  i=2,\ldots,n}\gamma_{\epsilon(2),\ldots,\epsilon(n)} \left(x_1+\epsilon(2)x_2+\ldots +\epsilon(n)x_n\right)^d\]
where  the $\gamma_{\epsilon(2),\ldots,\epsilon(n)}$ are scalars
and this decomposition involves the least number of summands.}
\end{prop}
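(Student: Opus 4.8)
The plan is to prove the identity by the Apolarity Lemma combined with a standard interpolation/Vandermonde argument. First I would observe that by Proposition \ref{monmialsumofpoercorol} the number of summands appearing in the proposed decomposition is exactly $\prod_{i=2}^n(a_i+1)=\mathrm{rk}(M)$, so if the decomposition is valid it is automatically of minimal length; thus the only real content is to verify that suitable scalars $\gamma_{\epsilon(2),\ldots,\epsilon(n)}$ exist making the displayed equality hold. To see that such a decomposition exists at all, recall from the proof of Proposition \ref{monmialsumofpoercorol} that the complete intersection ideal
\[
I=(X_2^{a_2+1}-X_1^{a_2+1},\ldots,X_n^{a_n+1}-X_1^{a_n+1})\subseteq M^\perp
\]
cuts out precisely $\prod_{i=2}^n(a_i+1)$ distinct reduced points, and I would check that these points are exactly the $[1:\epsilon(2):\cdots:\epsilon(n)]$ with $\epsilon(i)\in\mathcal{Z}(i)$, i.e.\ the common zeros are obtained by independently choosing $(a_i+1)$-st roots of unity in each coordinate. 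The Apolarity Lemma then guarantees that $M$ is a $k$-linear combination of the $d$-th powers of the corresponding linear forms $x_1+\epsilon(2)x_2+\ldots+\epsilon(n)x_n$, which is precisely the claimed shape.

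Next I would pin down the scalars and confirm they are all nonzero (so that the decomposition genuinely uses all $\prod_{i=2}^n(a_i+1)$ terms and is therefore minimal, not merely an upper bound). The key step here is to expand each $\left(x_1+\epsilon(2)x_2+\ldots+\epsilon(n)x_n\right)^d$ by the multinomial theorem and isolate the coefficient of the target monomial $x_1^{a_1}\cdots x_n^{a_n}$, obtaining the multinomial coefficient $\binom{d}{a_1,\ldots,a_n}$ times $\prod_{i=2}^n\epsilon(i)^{a_i}$. Summing over all the $\gamma$'s, matching the coefficient of $x_1^{a_1}\cdots x_n^{a_n}$ gives one linear equation, while matching every other monomial of degree $d$ forces the corresponding weighted sums of the $\gamma$'s to vanish. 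Because the exponents $\epsilon(i)$ range over a full set of $(a_i+1)$-st roots of unity, these conditions decouple coordinate by coordinate and reduce to a tensor product of Vandermonde/character-orthogonality systems over the cyclic groups $\mathcal{Z}(i)$.

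The main obstacle — and the heart of the argument — is therefore verifying that this Vandermonde system is nonsingular and extracting the $\gamma$'s explicitly. Concretely, I expect the orthogonality relations $\sum_{\epsilon\in\mathcal{Z}(i)}\epsilon^{\,k}=0$ unless $a_i+1\mid k$ to kill all the unwanted monomials automatically, leaving only the target monomial $x_1^{a_1}\cdots x_n^{a_n}$ surviving once the $\gamma$'s are taken to be equal up to the appropriate root-of-unity weights $\prod_{i=2}^n\epsilon(i)^{-a_i}$. This is exactly the place where one sees the uniform constant $\gamma_{\epsilon(2),\ldots,\epsilon(n)}=c\cdot\prod_{i=2}^n\epsilon(i)^{-a_i}$ for a single nonzero $c$ depending only on $d$ and the $a_i$, matching the explicit formula attributed to \cite{BBT2012} and \cite{Whieldon}. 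Once the orthogonality bookkeeping is in place the verification is a routine finite computation; I would present only the decoupling into the $n-1$ cyclic sums and the nonvanishing of $c$, and leave the explicit numerical value of $c$ to a one-line multinomial evaluation, since the minimality of the decomposition already follows formally from the rank computation in Proposition \ref{monmialsumofpoercorol}.
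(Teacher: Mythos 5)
Your proposal is correct and follows essentially the same route as the paper: both take the complete intersection ideal $(X_2^{a_2+1}-X_1^{a_2+1},\ldots,X_n^{a_n+1}-X_1^{a_n+1})\subseteq M^\perp$, identify its zero locus as the reduced points $[1:\epsilon(2):\cdots:\epsilon(n)]$ with $\epsilon(i)\in\mathcal{Z}(i)$, invoke apolarity in the refined form that attaches the linear forms to the coordinates of those points (the paper cites \cite[Lemma 1.15]{IaKa} for exactly this), and deduce minimality from $\mathrm{rk}(M)=\prod_{i=2}^n(a_i+1)$ of Proposition \ref{monmialsumofpoercorol}. Your extra Vandermonde/root-of-unity determination of the $\gamma$'s goes beyond the paper (which defers explicit scalars to \cite{BBT2012} and \cite{Whieldon}) and is sound, though note that the step where orthogonality kills every non-target monomial silently requires $a_1\leq a_i$ (the surviving exponents satisfy $b_i\equiv a_i \pmod{a_i+1}$, and only the hypothesis that $a_1$ is minimal forces $b_i=a_i$); in any case that computation is not needed for the statement itself.
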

\begin{proof}
{ Another consequence of \cite[Lemma 1.15]{IaKa} allows one to
write a form as a sum of powers of linear forms. If $I\subset
M^\perp$ is an ideal of $s$ points, then
\[M=\sum_{j=1}^{s}\gamma_j \left(\alpha_j(1)x_1+\alpha_j(2)x_2+\ldots +\alpha_j(n)x_n\right)^d\]
where the $\gamma_j$ are scalars and $[\alpha_1:\ldots:\alpha_n]$
are the coordinates of the points having defining ideal $I$. Given
$M$ we can choose the following ideal of points
\[I=(y_2^{a_2+1} - y_1^{a_2+1}, y_3^{a_3+1} - y_1^{a_3+1}, \ldots , y_n^{a_n+1} - y_1^{a_n+1} )
.\]

It is straightforward to see that the points defined by $I$ have
coordinates
\[[1:\epsilon(2):\ldots:\epsilon(n)]\]
where $\epsilon(i)\in\mathcal{Z}(i)$. Renaming the scalars and
taking all possible combinations of the roots of $1$ we get the
desired $\mathrm{rk}(M)=\Pi_{i=2}^n(a_i+1)$ points and the result
follows.}

\end{proof}

\begin{rem}{In order to find an explicit decomposition for a given
monomial it is enough to solve a linear system of equations to
determine the $\gamma_j$. For example, in the very simple case of
$M=x_0x_1x_2$, we only deal with square roots of $1$ and we get:
\[x_0x_1x_2={1\over 24}(x_0+x_1+x_2)^3-{1\over 24}(x_0+x_1-x_2)^3-{1\over 24}(x_0-x_1+x_2)^3+{1\over
24}(x_0-x_1-x_2)^3.\]}
\end{rem}

\begin{rem}\label{sumofcoprimerem}{Using Proposition \ref{sumofpowerdecmon} we can easily
find a minimal sum of powers decomposition for the sum of coprime
monomials. If $F=M_1+\ldots +M_r$, then a minimal sum of powers
decomposition of $F$ is obtained by decomposing each $M_i$ as
described in Proposition \ref{sumofpowerdecmon}.}
\end{rem}

\begin{rem}{
Let $F=\sum_1^r M_i$ be the sum of coprime monomials, and
$F=\sum_1^{\mathrm{rk}(F)} L_i^d$ be a minimal sum of powers
decomposition of $F$. By Remark \ref{leastvarrem} we get that each
linear form $L_i$ must involve the variable
$X_{1,i}, i=1,\ldots,r$, where these are the variables with the
least exponent in each $M_i$. A particular instance of this
property, for $r=1$, has been noticed in \cite{BBT2012}}\end{rem}

\end{document}